\newtheorem{thm}{Theorem}[section]
\newtheorem{cor}[thm]{Corollary}
\newtheorem{prop}[thm]{Proposition}
\theoremstyle{definition}
\newtheorem{defn}[thm]{Definition}
\newtheorem{exmp}[thm]{Example}
\theoremstyle{remark}
\let\c@equation\c@thm
\numberwithin{equation}{section}
\title{\small{Statistical quasi Cauchyness in two normed spaces}}
\author{\; \; \;Huseyin Cakalli \& Sibel Ersan\\
Maltepe University, TR 34857, \.{I}stanbul-Turkey}
\address{H\"usey\.{I}n \c{C}akall\i \\
          Maltepe University, Department of Mathematics, Marmara E\u{g}\.{I}t\.{I}m K\"oy\"u, TR 34857, Maltepe, \.{I}stanbul-Turkey \; \; \; \; \; Phone:(+90216)6261050 ext:2248, \;  fax:(+90216)6261113}
\email{huseyincakalli@maltepe.edu.tr; hcakalli@gmail.com}
\address{Sibel Ersan\\
            Faculty of Engineering And Natural Sciences, Maltepe University, Marmara E\u{g}\.{I}t\.{I}m K\"oy\"u, TR 34857, Maltepe, \.{I}stanbul-Turkey\\ Phone: (+90216) 6261050 ext:2396, fax: (+90216) 6261131}
\email{sibelersan@maltepe.edu.tr; sibelersan@gmail.com}
\date{\today}
\keywords{Statistical convergence, quasi-Cauchy sequences, continuity}
\subjclass[2010]{Primary: 40A05 Secondaries:40A35, 26A15, 40A30}
\begin{document}

\begin{abstract}
A function $f$ defined on a subset $E$ of a two normed space $X$ is statistically ward continuous if it preserves statistically quasi-Cauchy sequences of points in $E$ where a sequence $(x_n)$ is statistically quasi-Cauchy if $(\Delta x_{n})$ is a statistically null sequence. A subset $E$ of $X$ is statistically ward compact if any sequence of points in $E$ has a statistically quasi-Cauchy subsequence. In this paper, new kinds of continuities are investigated in two normed spaces. It turns out that uniform limit of statistically ward continuous functions is again statistically ward continuous.

\end{abstract}

\maketitle

\section{Introduction}
 The concept of continuity and any concept involving continuity play a very important role in pure mathematics and also in other branches of sciences involving mathematics, i.e. in computer science, information theory, and biological science.

The idea of statistical convergence was first given under the name "almost convergence" by Zygmund in Warsaw in 1935 \cite{ZygmundTrigonometricseries}. Statistical convergence was formally introduced by Fast \cite{Fast} and later was reintroduced by Schoenberg \cite{Schoenberg}, and also independently by Buck \cite{BuckGeneralizedasymptoticdensity}. This concept has become an active area of active research. It has been applied in various areas (\cite{ErdosSurlesdensitesde,MillerAmeasuretheoresubsequencecharacterizationofstatisticalconvergence,Schoenberg,FridyOnstatisticalconvergence,CasertaMaioKocinacStatisticalConvergenceinFunctionSpaces,ZygmundTrigonometricseries,FreedmanandSemberDensitiesandsummability,MaddoxStatisticalconvergenceinlocallyconvex,ConnorandSwardsonStrongintegralsummabilityandstonecompactification,MakarovLevinRubinovMathematicalEconomicTheory}).

A new development of the concept of $2$-normed spaces began with 1962 by S. G\"ahler (\cite{gohler}, \cite{g2}, and \cite{g3}). Since then, this concept has been studied by many authors (\cite{gunawan, aleks,ist}). They contributed a lot for the extension of this branch of mathematics.

A real function $f$ is continuous if and only if it preserves convergent sequences. A subset $E$ of $\textbf{R}$, the set of real numbers, is compact if any sequence of points in $E$ has a convergent subsequence whose limit is in $E$. Using the idea of continuity and compactness, many kinds of continuities and compactness were introduced and investigated (\cite{burton,cakalli,cakallislowlyoscillatingcontinuity,CakalliNewkindsofcontinuities, CakallistatisticalquasiCauchysequences, DikandCanak}). Statistical ward continuity of a real function and statistical ward compactness of a subset $E$ of $\textbf{R}$ were introduced by Cakalli in \cite{CakalliStatisticalwardcontinuity} in the sense that a real function $f$ is called statistically ward continuous on $E$ if the sequence $\left(f\left(x_n\right)\right)$ is statistically quasi-Cauchy whenever $\textbf{x}=(x_n)$ is a statistically quasi-Cauchy sequence of points in $E$.

The aim of this paper is to investigate statistically ward continuity in two normed spaces, and prove interesting theorems.
\section{Preliminaries}

First of all, some definitions and notation will be given in the following. Throughout this paper, $\textbf{N}$, and $\textbf{R}$ will denote the set of all positive integers, and the set of all real numbers, respectively. First we recall the definition of a two normed space.
\begin{defn} \label{Definitionofatwonormedspace} (\cite{gohler})
\emph{Let $X$ be a real linear space with $dim X >1$ and \; \; \; \; \; $||.,.||:X^{2}\rightarrow \textbf{R}$ a function. Then $(X,||.,.||)$ is called a linear $2$-normed space if
\begin{enumerate}
	\item $\left\|x,y\right\|=0\Leftrightarrow$ x and y are linearly dependent,
	\item $\left\|x,y\right\|=\left\|y,x\right\|$,
	\item $\left\|\alpha x,y\right\|=\left|\alpha\right|\left\|x,y\right\|$,
	\item $\left\|x,y+z\right\|\leq\left\|x,y\right\|+\left\|x,z\right\|$
\end{enumerate}
for $\alpha \in \textbf{R}$ and $x,y,z\in X$. The function $||.,.||$ is called the $2$-norm on $X$.
}

Observe that in any $2$-normed space $ \left(X,\left\|.,.\right\|\right)$ we have $ \left\|.,.\right\|$ is nonnegative, $||x-z,x-y||=||x-z,y-z||$, and $\forall x,y\in X, \alpha\in \mathbf{R}$ $\left\|x,y+\alpha x\right\|=\left\|x,y\right\|$. Throughout this paper by $X$ we will mean a $2$-normed space with a two norm $\left\|.,.\right\|$.
\end{defn}
A classical example is the two normed space $X=\mathbf{R}^{2}$ with the two norm $\left\|.,.\right\|$ defined  by $\left\|a,b\right\|=\left|a_{1}b_{2}-a_{2}b_{1}\right|$ where $a=\left(a_{1},a_{2}\right)$, $b=\left(b_{1},b_{2}\right)\in \mathbf{R}^{2}$. This is
the area of the parallelogram determined by the vectors $a$ and $b$.

A sequence $(x_{n})$ of points in $X$ is said to be convergent to an element $x\in{X}$ if $lim_{n\rightarrow\infty}\left\|x_{n}-x,z\right\|=0$ for every $z\in X$. This is denoted by $lim_{n\rightarrow\infty} ||x_{n}, z|| = ||x,z||$. A sequence $\left(x_n\right)$  of points in $X$ is called Cauchy if  $lim_{n,m\rightarrow\infty}\left\|x_{n}-x_{m},z\right\|=0$ for every $z\in X$ (\cite{gohler}).

A sequence of functions $\left(f_n\right)$ is said to be uniformly convergent to a function $f$ on a subset $E$ of $X$ if for each $\epsilon>0$, an integer $N$ can be found such that $\left\|f_n\left(x\right)-f\left(x\right),z\right\|<\epsilon$ for $n\geq N$ and for all $x,z\in X$ (\cite{gohler}).

The concept of statistical convergence is a generalization of the usual notion of convergence that, for real-valued sequences, parallels the usual theory of convergence. For a subset $M$ of $\textbf{N}$ the asymptotic density of $M,$ denoted by $\delta(M)$, is given by
\[
\delta(M)=\lim_{n\rightarrow\infty}\frac{1}{n}|\{k\leq n: k\in M\}|,
\]
if this limit exists where $|\{k \leq n: k\in {M}\}|$ denotes the cardinality of the set $\{k \leq n : k \in{M}\}$.
Let recall that a sequence $(x_k)$ is said to be statistically convergent to $L$ if for every $\epsilon>0$ the set $\{k\in N: ||x_k-L,z||\geq\epsilon\}$ has natural density zero for each nonzero $z$ in $X$, in other words $(x_k)$ statistically converges to $L$ in 2-normed space $X$ if $$\lim_{k\rightarrow\infty}\frac{1}{k}|\{k\in N: ||x_k-L,z||\geq\epsilon\}|=0$$ for each nonzero $z$ in $X$. For $L=0$, we say this is statistically null \cite{GurdalandPehlivanStatisticalconvergencein$2$-normedspaces}.

\section{Results}

In this section we investigate the notion of statistically ward continuity of a function on a two normed space. First we give a definition of a statistically quasi-Cauchy sequence in a two normed space.
\begin{defn} \textit{A sequence $(x_k)$ of points in $X$ statistically quasi-Cauchy if $st-lim_{k\rightarrow\infty}\Delta x_k=0$ where $\Delta x_{k}=x_{k+1}-x_{k}$. That is, for each $\epsilon$,
\begin{equation*}
lim_{n\rightarrow\infty}\frac{1}{n}\left|\left\{k\leq n:\left\|\Delta x_k,z\right\|\geq\epsilon\right\}\right|=0, \forall z\in X.
\end{equation*}}
\end{defn}
We note that any quasi-Cauchy sequence is statistically quasi-Cauchy not only in the real case and in the metric setting, but also in a $2$-normed space. A statistically convergent sequence is statistically quasi-Cauchy. However  the converse is not always true, i.e. there are statistically quasi-Cauchy sequences which are not statistically convergent. Any Cauchy sequence is statistically quasi-Cauchy, but the converse is not always true.

Now we give the definition of statistical ward compactness of a subset of $X$.
\begin{defn}
\textit{A subset $E$ of $X$ is called statistically ward compact if any sequence of points in $E$ has a statistically quasi-Cauchy subsequence.}
\end{defn}

First, we note that any finite subset of $X$ is statistically ward compact, union of two statistically ward compact subsets of $X$ is statistically ward compact and intersection of any statistically ward compact subsets of $X$ is statistically ward compact. Furthermore any subset of a statistically ward compact set is statistically ward compact. Any compact subset of $X$ is also statistically ward compact.

A function $f$ on a subset $E$ of $X$ is sequentially continuous at $x_0$ if for any sequence $(x_{n})$  of points in $E$ converging to $x_{0}$, we have $(f(x_{n}))$ converges to $f(x_{0})$. $f$ is sequentially continuous on $E$ if it is sequentially continuous at every point of $E$. This is equivalent to the statement that $f$ preserves convergent sequences of points in $E$.
\begin{defn}
\textit{A function $f$ on a subset $E$ of $X$ is said to be statistically sequentially continuous at $x_0$ if for any sequence $(x_{n})$  of points in $E$ statistically converging to $x_{0}$, we have $(f(x_{n}))$ statistically converges to $f(x_{0})$ (see also \cite{MaioKocinac} and \cite{CakalliandKhan}).}
\end{defn}
\begin{thm} \label{Thmstatisticallysequentiallycontinuityimpliessequentialcontinuity} \textit{If $f$ is statistically sequentially continuous, then it is sequentially continuous.}
\end{thm}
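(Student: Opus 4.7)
The plan is to argue by contradiction. Assume $f$ is statistically sequentially continuous on $E$ but fails to be sequentially continuous at some $x_0 \in E$. Then there exists a sequence $(x_n)$ in $E$ with $(x_n) \to x_0$ in $X$ (i.e., $\|x_n - x_0, z\| \to 0$ for every $z \in X$) while $(f(x_n))$ does not converge to $f(x_0)$. Unpacking the failure of convergence produces a nonzero $z_0 \in X$ and an $\epsilon_0 > 0$ such that the index set $M = \{n \in \mathbb{N} : \|f(x_n) - f(x_0), z_0\| \geq \epsilon_0\}$ is infinite.

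Enumerate $M = \{n_1 < n_2 < \cdots\}$ and pass to the subsequence $y_k := x_{n_k}$. The first key observation is that ordinary $2$-norm convergence implies statistical convergence: since $(y_k)$ is a subsequence of a sequence converging to $x_0$, for each nonzero $z \in X$ and each $\epsilon > 0$ the set $\{k : \|y_k - x_0, z\| \geq \epsilon\}$ is finite, hence of natural density zero. Therefore $(y_k)$ is statistically convergent to $x_0$.

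Applying the hypothesis of statistical sequential continuity to $(y_k)$ forces $(f(y_k))$ to statistically converge to $f(x_0)$. In particular, instantiating the definition with $z = z_0$ and $\epsilon = \epsilon_0$, the set $\{k : \|f(y_k) - f(x_0), z_0\| \geq \epsilon_0\}$ must have natural density zero. However, by construction $\|f(y_k) - f(x_0), z_0\| \geq \epsilon_0$ for every $k \in \mathbb{N}$, so this set equals $\mathbb{N}$ and has density one. This contradiction completes the argument.

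The only conceptual subtlety I would flag as a potential obstacle is the necessity of the thinning step: one cannot directly apply statistical sequential continuity to the original sequence $(x_n)$, because the bad index set $M$ could a priori have density zero (for instance $M = \{k^2 : k \in \mathbb{N}\}$), in which case $(f(x_n))$ would still statistically converge to $f(x_0)$ despite failing to converge in the $2$-norm sense. Extracting $y_k = x_{n_k}$ concentrates the bad indices into a set of full density, after which statistical convergence of the images is violated in the starkest possible way.
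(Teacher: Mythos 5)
Your proof is correct and follows essentially the same route as the paper's: argue by contradiction, extract the subsequence indexed by the bad set where $\|f(x_{n_k})-f(x_0),z_0\|\geq\epsilon_0$, use the regularity of statistical convergence to see that this subsequence still statistically converges to $x_0$, and then observe that its image violates statistical convergence on a set of full density. Your closing remark about why the thinning step is unavoidable is a point the paper leaves implicit, but the underlying argument is the same.
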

\begin{proof} Suppose that $f$ is not sequentially continuous at a point $x$ of $X$ so that there exists a convergent sequence $\textbf{x}=(x_{n})$  of points in $E$ with limit $x$ such that $(f(x_n))$ is not convergent to $f(x)$. Then there exists a $z\in X$, and a positive real number $\epsilon_0$ such that for each $n\in \textbf{N}$ there is a $k_n\in N$ with $||f(x_{k_{n}})-f(x),z||\geq \epsilon_0$. Consider the subsequence $(x_{k_{n}})$ of $\textbf{x}$. As statistical sequential method is regular, $(x_{k_{n}})$ is statistically convergent to $x$. Since $$\{k\leq n: ||f(x_{k_{n}})-f(x),z||\geq \epsilon_0\}=\{1,2,...,k\},$$ the transformed sequence $(f(x_n))$ is not statistically convergent to $f(x)$. This completes the proof of the theorem.
\end{proof}
\begin{thm} \textit{If $st-\lim_{n\rightarrow\infty}x_n=x_0$ implies that $\lim_{n\rightarrow\infty}f(x_n)=f(x_0)$ for an $x_0\in X$, then $f$ is a constant function.}
\end{thm}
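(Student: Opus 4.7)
The plan is to show $f(y)=f(x_0)$ for every $y$ in the domain by exploiting the freedom that statistical convergence allows us: we can alter a sequence on a set of density zero without disturbing the statistical limit, but such alterations affect ordinary convergence.

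First I would fix an arbitrary point $y$ (different from $x_0$) and construct the sequence
\[
x_n=\begin{cases} y, & n \text{ is a perfect square},\\ x_0, & \text{otherwise.}\end{cases}
\]
Then I would verify that $\textbf{x}=(x_n)$ is statistically convergent to $x_0$: for each $\epsilon>0$ and each nonzero $z\in X$, the set $\{k\le n:\|x_k-x_0,z\|\ge\epsilon\}$ lies inside $\{k\le n: k \text{ is a perfect square}\}$, whose cardinality is at most $\sqrt{n}$, giving density zero.

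By hypothesis, this forces $\lim_{n\to\infty}f(x_n)=f(x_0)$ in the ordinary sense, meaning $\|f(x_n)-f(x_0),z\|\to 0$ for every $z\in X$. But along the subsequence indexed by the perfect squares, the values $f(x_{n^2})=f(y)$ are constant, so every subsequence of an ordinary convergent sequence still converges to the same limit; hence the constant sequence $f(y),f(y),\dots$ converges to $f(x_0)$, which yields $\|f(y)-f(x_0),z\|=0$ for every $z\in X$.

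The main point — and the only nonformal step — is to extract $f(y)=f(x_0)$ from the last equality. In a 2-normed space with $\dim X>1$, if a vector $w$ satisfies $\|w,z\|=0$ for all $z$, then $w$ is linearly dependent with every element of $X$; choosing any $z$ outside the span of $w$ (which exists because $\dim X>1$) forces $w=0$ by axiom~(1) of the $2$-norm. Applied to $w=f(y)-f(x_0)$, this gives $f(y)=f(x_0)$. Since $y$ was arbitrary, $f$ is constant. The main obstacle to watch out for is this last step: if one forgets that $\|\cdot,\cdot\|$ vanishing for a single $z$ does not imply the vector is zero, the conclusion would look unjustified; the argument genuinely uses the hypothesis $\dim X>1$ built into Definition~\ref{Definitionofatwonormedspace}.
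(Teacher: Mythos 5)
Your proposal is correct and follows essentially the same route as the paper: the paper also builds the sequence equal to an arbitrary value on the perfect squares and to $x_0$ elsewhere, notes it is statistically convergent to $x_0$, and reads off $f(t)=f(x_0)$ from the convergent subsequence indexed by the squares. The only difference is that you spell out the final step (deducing $f(y)=f(x_0)$ from $\left\|f(y)-f(x_0),z\right\|=0$ for all $z$ via $\dim X>1$ and axiom (1) of the $2$-norm), which the paper leaves implicit.
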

\begin{proof}
Let $x_0$ be a fixed element of $X$. We can construct a sequence as
$$
 x_{n} =
 \begin{cases}
 t \;, & \text{if }n=k^2\text{ for a positive integer}\; k \\
 x_0 \;, & \text{if otherwise}
 \end{cases}
 .$$
It is easy to see that $st-\lim_{n\rightarrow\infty}x_n=x_0$. The transformed sequence $(f(x_n))$ defined by
$$
 f(x_{n}) =
 \begin{cases}
 f(t) \;, & \text{if }n=k^2\text{ for a positive integer}\; k \\
 f(x_0) \;, & \text{if otherwise}
 \end{cases}
 $$
is convergent. Thus the subsequence $$(w_n)=(f(x_{n^2}))=(f(x_{1}),f(x_{4}),f(x_{9}),...,f(x_{n^2}),...)$$ of the sequence $(f(x_n))$ is also convergent to $f(x_0)$. Since for all $n\in \textbf{N}$, $w_n=f(x_{n^2})=f(t)$, it follows that $f(t)=f(x_0)$. This completes the proof.
\end{proof}
\begin{defn}
\textit{A function defined on a subset $E$ of $X$ is called statistically ward continuous if it preserves statistically quasi-Cauchy sequence, i.e. $(f(x_n))$ is statistically quasi-Cauchy whenever $(x_n)$ is.}
\end{defn}
We note that a composite of two statistically ward continuous functions is statistically ward continuous. Now we prove that sum of two statistically ward continuous functions is statistically ward continuous.
\begin{prop}
\textit{Sum of two statistically ward continuous functions is statistically ward continuous.}
\end{prop}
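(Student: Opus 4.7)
The plan is to unwind the definition of statistical ward continuity for the sum $f+g$ and then reduce to the hypotheses on $f$ and $g$ separately via the triangle inequality for the $2$-norm together with the standard density estimate for a union of two asymptotically small sets. Start from an arbitrary statistically quasi-Cauchy sequence $(x_n)$ of points in $E$. By hypothesis, both $(f(x_n))$ and $(g(x_n))$ are statistically quasi-Cauchy, that is, for every $z \in X$ and every $\epsilon > 0$ the sets
\[
A_n(\epsilon,z) = \{k \leq n : \|\Delta f(x_k),z\| \geq \epsilon\}, \qquad B_n(\epsilon,z) = \{k \leq n : \|\Delta g(x_k),z\| \geq \epsilon\}
\]
satisfy $\tfrac{1}{n}|A_n(\epsilon,z)| \to 0$ and $\tfrac{1}{n}|B_n(\epsilon,z)| \to 0$.

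Next I would observe that $\Delta(f+g)(x_k) = \Delta f(x_k) + \Delta g(x_k)$ and apply axiom (4) of the $2$-norm to obtain
\[
\|\Delta(f+g)(x_k),z\| \leq \|\Delta f(x_k),z\| + \|\Delta g(x_k),z\|.
\]
Fix $z \in X$ and $\epsilon > 0$. The above inequality immediately yields the set inclusion
\[
\{k \leq n : \|\Delta(f+g)(x_k),z\| \geq \epsilon\} \subseteq A_n(\epsilon/2,z) \cup B_n(\epsilon/2,z),
\]
since if neither summand reaches $\epsilon/2$, the sum cannot reach $\epsilon$.

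Finally, dividing by $n$ and using $|A_n \cup B_n| \leq |A_n| + |B_n|$, the right-hand side tends to $0$ as $n \to \infty$. Hence $\tfrac{1}{n}|\{k\leq n : \|\Delta(f+g)(x_k),z\|\geq \epsilon\}| \to 0$ for every $z \in X$ and every $\epsilon > 0$, so $((f+g)(x_n))$ is statistically quasi-Cauchy. Since $(x_n)$ was arbitrary, $f+g$ is statistically ward continuous. There is no real obstacle here: the whole argument is a transcription of the usual ``sum of null-density sequences is null-density'' trick to the $2$-normed setting, and the only place the $2$-norm structure enters is the triangle inequality applied with the auxiliary vector $z$ held fixed throughout.
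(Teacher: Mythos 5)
Your proof is correct and follows essentially the same route as the paper's: the pointwise triangle inequality for the $2$-norm, the inclusion of the bad set for $f+g$ at level $\epsilon$ into the union of the bad sets for $f$ and $g$ at level $\epsilon/2$, and subadditivity of density. If anything, your write-up is the more careful one, since it consistently uses $\left\|\cdot,z\right\|$ with the auxiliary vector $z$ where the paper's displayed formulas lapse into absolute-value notation.
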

\begin{proof}
Let $f$ and $g$ be statistically ward continuous functions on a subset $E$ of $X$. To prove that $f+g$ is statistically ward continuous on $E$, take any statistically quasi-Cauchy sequence $(x_{k})$ in $E$. Then $(f(x_{k}))$ and $(g(x_{k}))$ are statistically quasi-Cauchy sequences. Let $\varepsilon>0$ be given. Since $(f(x_{k}))$ and $(g(x_{k}))$ are statistically quasi-Cauchy, we have
\[
\lim_{n\rightarrow\infty}\frac{1}{n }|\{k\leq n: |\Delta (f(x_{k}))| \geq\frac{\varepsilon}{2}\}|=0
\]
and
\[
\lim_{n\rightarrow\infty}\frac{1}{n }|\{k\leq n: |\Delta (g(x_{k}))| \geq\frac{\varepsilon}{2}\}|=0
.\]
Hence \[
\lim_{n\rightarrow\infty}\frac{1}{n }|\{k\leq n: |\Delta (f(x_{k})+g(x_{k}))| \geq \varepsilon\}|=0
\]
which follows from the following inclusion
\begin{displaymath}
\{k\in I_{n}: |\Delta ((f+g)(x_{k}))| \geq\frac{\varepsilon}{2}\}\subset{\{k\leq n: |\Delta (f(\alpha_{k}))| \geq\frac{\varepsilon}{2}\} \cup {\{k\leq n: |\Delta (g(x_{k}))| \geq\frac{\varepsilon}{2}\}}}.
\end{displaymath}

\end{proof}
In connection with statistically quasi-Cauchy sequences and convergent sequences the problem arises to investigate the following types of continuity of functions on $X$:
\begin{enumerate}
	\item $\left(x_n\right)$ is statistical quasi-Cauchy $\Rightarrow\left(f\left(x_n\right)\right)$ is statistical quasi-Cauchy.
	\item $\left(x_n\right)$ is statistical quasi-Cauchy $\Rightarrow\left(f\left(x_n\right)\right)$ is convergent.
	\item $\left(x_n\right)$ is convergent $\Rightarrow\left(f\left(x_n\right)\right)$ is convergent.
	\item $\left(x_n\right)$ is convergent $\Rightarrow\left(f\left(x_n\right)\right)$ is statistical quasi-Cauchy.
	\item $\left(x_n\right)$ is statistical convergent $\Rightarrow\left(f\left(x_n\right)\right)$ is statistical convergent.
\end{enumerate}

In the previous statements (1) is a statistical ward continuity, (3) is the ordinary continuity and (5) is a statistical continuity of the function $f$. It is obvious that $(2)\Rightarrow (1)$ while $(1)$ does not imply $(2)$, $(1)\Rightarrow (4)$ while $(4)$ does not imply $(1)$, $(2)\Rightarrow (3)$ while $(3)$ does not imply $(2)$ and lastly $(3)$ is equivalent to $(4)$. We give the definition of statistical sequential continuity in $2$-normed spaces in the following before proving that (1) implies (5).

\begin{thm}\label{43}
 \textit{If $f:X\rightarrow X$ is statistically ward continuous on a subset $E$ of $X$, then it is statistically sequentially continuous on $E$.}
\end{thm}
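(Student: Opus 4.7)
The plan is to derive statistical sequential continuity from statistical ward continuity via an interleaving construction. Given a sequence $(x_n)$ of points in $E$ with $st$-$\lim_{n\to\infty} x_n = x_0$, I would form the auxiliary sequence $(y_n)$ in $E$ defined by $y_{2n-1} = x_n$ and $y_{2n} = x_0$. The goal is to transfer the hypothesis on $(x_n)$ to a statistical quasi-Cauchy hypothesis on $(y_n)$, apply statistical ward continuity, and then read off statistical convergence of $(f(x_n))$ to $f(x_0)$ from the interleaved differences.

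The first step is to verify that $(y_n)$ is statistically quasi-Cauchy. A direct computation gives $\Delta y_{2n-1} = x_0 - x_n$ and $\Delta y_{2n} = x_{n+1} - x_0$. For any $\epsilon>0$ and any nonzero $z\in X$, the bad index set $\{k\leq N:\|\Delta y_k,z\|\geq\epsilon\}$ is contained in the union of $\{2n-1\leq N:\|x_n-x_0,z\|\geq\epsilon\}$ with $\{2n\leq N:\|x_{n+1}-x_0,z\|\geq\epsilon\}$; both sets have natural density zero because $(x_n)$ statistically converges to $x_0$, so $(y_n)$ is statistically quasi-Cauchy (in fact statistically convergent to $x_0$). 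By statistical ward continuity of $f$, the sequence $(f(y_n))$ is then statistically quasi-Cauchy.

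The second step is to extract the desired conclusion about $(f(x_n))$ from $\Delta f(y_n)$. The key identity is $\Delta f(y_{2n-1}) = f(y_{2n}) - f(y_{2n-1}) = f(x_0) - f(x_n)$. Arguing by contradiction, if $(f(x_n))$ fails to statistically converge to $f(x_0)$, there exist $z_0\in X$ and $\epsilon_0>0$ with
\[
A = \{n\in\mathbf{N}:\|f(x_n)-f(x_0),z_0\|\geq\epsilon_0\}
\]
having positive upper density $d$. Then $\{2n-1:n\in A\}\subseteq\{k:\|\Delta f(y_k),z_0\|\geq\epsilon_0\}$, and counting, the latter has upper density at least $d/2>0$, contradicting that $(f(y_n))$ is statistically quasi-Cauchy.

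The main (minor) obstacle is the density bookkeeping when passing from the $x$-index to the $y$-index: one must confirm that halving the index set preserves positive upper density, i.e.\ that $\limsup_{N}\tfrac{1}{N}|\{2n-1\leq N:n\in A\}|\geq d/2$. Once this elementary counting is in place, everything else follows from the definitions and the hypothesis on $f$, so no further technical machinery about the $2$-norm is needed.
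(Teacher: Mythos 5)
Your proposal is correct and follows essentially the same route as the paper: both interleave $(x_n)$ with the constant $x_0$ to produce a statistically convergent (hence statistically quasi-Cauchy) sequence, apply statistical ward continuity, and recover statistical convergence of $(f(x_n))$ to $f(x_0)$ from the odd-indexed differences $\Delta f(y_{2n-1})=f(x_0)-f(x_n)$. The only difference is that you make explicit the density bookkeeping (the factor $1/2$ when passing between the $x$-index and the $y$-index) that the paper's proof leaves implicit in its final ``it follows that'' step.
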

\begin{proof}
Let $(x_k)$ be any statistically convergent sequence of points in E with a statistically limit $x_{0}$. Hence for all $z\in{X}$
$$lim_{n\rightarrow\infty}\frac{1}{n}\left|\left\{k\leq n:\left\|x_k-x_0,z\right\|\geq\epsilon\right\}\right|=0.$$
Then the sequence $\boldsymbol{\xi}=(\xi_{n})$ defined by
$$\xi_{n} =
 \begin{cases}
 x_{k} \;, & \text{if }n=2k-1\text{ for a positive integer}\; k \\
 x_{0} \;, & \text{if }n\text{ is even}
 \end{cases}
 $$
is also statistically convergent to $x_{0}$. Therefore it is a statistically quasi-Cauchy sequence. As $f$ is statistically ward continuous on $E$, the transformed sequence $f(\boldsymbol{\xi})=(f(\xi_{n}))$ obtained by $$f(\xi_{n}) =
 \begin{cases}
 f(x_{k}) \;, & \text{if }n=2k-1\text{ for a positive integer}\; k \\
 f(x_{0}) \;, & \text{if }n\text{ is even}
 \end{cases}
 $$
is also statistically quasi-Cauchy. Now it follows that $$lim_{n\rightarrow\infty}\frac{1}{n}\left|\left\{k\leq n:\left\|f(x_k)-f(x_0),z\right\|\geq\epsilon\right\}\right|=0$$ for every $z\in{X}$. It implies that the sequence $\left(f\left(x_n\right)\right)$  statistically converges to $f\left(x_0\right)$. This completes the proof of the theorem.
\end{proof}
The converse of the theorem is not always true. The following example illustrates this situation:
\begin{exmp} Consider two normed space $\textbf{R}^{2}$ with the two norm $\left\|(a_1,a_2),(b_1,b_2)\right\|=\left|a_{1}b_{2}-a_{2}b_{1}\right|$, the sequence $x=(x_k)=(x_{k}^1,x_{k}^2)=(\sqrt{k},\sqrt{k})$, and the function $f(x)=f(x_{1}, x_{2})=((x_{1})^{2}, (x_{2})^{2})$. Thus
it is easily verified that the sequence $(x_k)$ is statistically quasi-Cauchy since $\forall z\in X$
\begin{align*}
&lim_{n\rightarrow\infty}\frac{1}{n}\left|\left\{k\leq n:\left\|\Delta x_k,z\right\|\geq\epsilon\right\}\right|
=lim_{n\rightarrow\infty}\frac{1}{n}\left|\left\{k\leq n:\left\|x_{k+1}-x_k,z\right\|\geq\epsilon\right\}\right|\\
&=lim_{n\rightarrow\infty}\frac{1}{n}\left|\left\{k\leq n:\left\|\left(\sqrt{k+1},\sqrt{k+1}\right)-\left(\sqrt{k},\sqrt{k}\right),z\right\|\geq\epsilon\right\}\right|\\
&=lim_{n\rightarrow\infty}\frac{1}{n}\left|\left\{k\leq n:\left\| \left(\sqrt{k+1}-\sqrt{k},\sqrt{k+1}-\sqrt{k}\right),z\right\|\geq\epsilon\right\}\right|\\
&=0
\end{align*}
On the other hand
$\left(f\left(x_k\right)\right)=\left(f\left(\sqrt{k},\sqrt{k}\right)\right)=(k,k)$ is not statistically quasi-Cauchy quasi-Cauchy. Since $\forall z\in X$
\begin{align*}
&lim_{n\rightarrow\infty}\frac{1}{n}\left|\left\{k\leq n:\left\|\Delta f\left( x_k\right),z\right\|\geq\frac{1}{2}\right\}\right|\\
&=lim_{n\rightarrow\infty}\frac{1}{n}\left|\left\{k\leq n:\left\|f\left(x_{k+1}\right)-f\left(x_k\right),z\right\|\geq\frac{1}{2}\right\}\right|\\
&=lim_{n\rightarrow\infty}\frac{1}{n}\left|\left\{k\leq n:\left\|\left(k+1,k+1\right)-\left(k,k\right),z\right\|\geq\frac{1}{2}\right\}\right|\\
&=lim_{n\rightarrow\infty}\frac{1}{n}\left|\left\{k\leq n:\left\|\left(1,1\right),\left(z_1,z_2\right)\right\|\geq\frac{1}{2}\right\}\right|\\
&=lim_{n\rightarrow\infty}\frac{1}{n}\left|\left\{k\leq n:z_2-z_1\geq\frac{1}{2}\right\}\right|,\ \text{for } z_1=1,z_2=4\\
&=lim_{n\rightarrow\infty}\frac{1}{n}\left|\left\{k\leq n:3\geq\frac{1}{2}\right\}\right|=lim_{n\rightarrow\infty}\frac{1}{n}n=1\neq 0
\end{align*}
\end{exmp}
Now we state the following result related to ordinary sequential continuity.
\begin{cor}
\textit{If a function $f$ is statistically ward continuous, then it is sequentially continuous in the ordinary sense.}
\end{cor}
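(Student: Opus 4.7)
My plan is to prove this corollary by chaining together the two earlier implications already established in the paper, so that no direct argument is needed; the corollary is essentially bookkeeping. The two ingredients are Theorem \ref{43}, which states that statistical ward continuity on $E$ implies statistical sequential continuity on $E$, and Theorem \ref{Thmstatisticallysequentiallycontinuityimpliessequentialcontinuity}, which states that statistical sequential continuity implies ordinary sequential continuity.

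Concretely, I would open the proof by assuming $f$ is statistically ward continuous on a subset $E$ of $X$, invoke Theorem \ref{43} to conclude that $f$ is statistically sequentially continuous on $E$, and then invoke Theorem \ref{Thmstatisticallysequentiallycontinuityimpliessequentialcontinuity} to conclude that $f$ is sequentially continuous in the ordinary sense. The composition is valid because the domains and the role of $E$ are consistent across the two statements: both are phrased for functions on a subset $E$ of the two-normed space $X$, and both notions of continuity are defined pointwise through sequences of points in $E$.

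There is essentially no obstacle here, since the theorem is a formal consequence. The only thing to be mildly careful about is making clear the direction of the implications (statistical ward continuity $\Rightarrow$ statistical sequential continuity $\Rightarrow$ ordinary sequential continuity), and noting that this corresponds exactly to the implication $(1)\Rightarrow(5)$ discussed in the enumeration preceding Theorem \ref{43}, combined with the fact that ordinary convergence implies statistical convergence (so an ordinary convergent sequence may be fed into the statistically sequentially continuous $f$ and its image, being statistically convergent to $f(x_0)$, can then be upgraded to ordinary convergence along the chosen subsequence argument used in the proof of Theorem \ref{Thmstatisticallysequentiallycontinuityimpliessequentialcontinuity}). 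Thus the proof reduces to a single sentence citing the two theorems.
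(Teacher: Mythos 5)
Your proposal is correct and follows essentially the same route as the paper: the paper's own proof simply cites Theorem \ref{Thmstatisticallysequentiallycontinuityimpliessequentialcontinuity} and omits the details, the implicit first link being Theorem \ref{43} exactly as you spell out. Your version is in fact slightly more careful than the paper's, since you make both links of the chain (statistical ward continuity $\Rightarrow$ statistical sequential continuity $\Rightarrow$ ordinary sequential continuity) explicit rather than citing only the second.
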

\begin{proof} The proof follows from Theorem \ref{Thmstatisticallysequentiallycontinuityimpliessequentialcontinuity} so is omitted.

\end{proof}
\begin{thm}\textit{ Statistically ward continuous image of any statistically ward compact subset of $X$ is statistically ward compact.}
\end{thm}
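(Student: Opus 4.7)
The plan is to adapt the classical argument that continuous images of compact sets are compact, replacing convergence with statistical quasi-Cauchyness throughout. Let $E \subseteq X$ be statistically ward compact and let $f$ be statistically ward continuous on $E$. To show $f(E)$ is statistically ward compact, I need to verify that every sequence in $f(E)$ admits a statistically quasi-Cauchy subsequence.

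First I would start with an arbitrary sequence $(y_n)$ of points in $f(E)$ and, for each $n$, choose a preimage $x_n \in E$ with $f(x_n) = y_n$; this yields a sequence $(x_n)$ in $E$. By the statistical ward compactness of $E$, this sequence has a statistically quasi-Cauchy subsequence $(x_{n_k})$. Then I would apply the statistical ward continuity of $f$ directly to the subsequence $(x_{n_k})$: since $f$ preserves statistically quasi-Cauchy sequences, the image sequence $(f(x_{n_k}))$ is statistically quasi-Cauchy. But $f(x_{n_k}) = y_{n_k}$, so $(y_{n_k})$ is a statistically quasi-Cauchy subsequence of $(y_n)$, which is exactly what is needed.

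There is no real obstacle here; the argument is formal once one has the definitions. The only minor point to be careful about is the remark that the image of a subsequence under $f$ coincides with the corresponding subsequence of the image sequence, which is immediate from the fact that $f$ is a function and the indexing commutes. One should also note that the pullback sequence $(x_n)$ is not unique (any choice of preimages works), but this does not affect the argument since statistical ward compactness of $E$ only asks for existence of a statistically quasi-Cauchy subsequence of $(x_n)$, regardless of which preimages were chosen.
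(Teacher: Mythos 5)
Your proof is correct and follows essentially the same route as the paper's: pull an arbitrary sequence in $f(E)$ back to $E$, extract a statistically quasi-Cauchy subsequence by statistical ward compactness, and push it forward using statistical ward continuity of $f$. If anything, your write-up is the more complete of the two, since the paper's proof never explicitly introduces the sequence in $f(E)$ or the choice of preimages, and merely asserts (rather than derives from ward continuity) that the image subsequence $(f(z_k))$ is statistically quasi-Cauchy.
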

\begin{proof}
Let $E$ be a statistically ward compact subset of $X$. Statistically ward compactness of $E$ implies that there is a subsequence $z=(z_k)$ of $\bold x=(x_n)$ with $st-lim_{k\rightarrow\infty}||\Delta z_k,y||=0, \forall y\in X.$ Then $(t_k)=(f(z_k))$. $(t_k)$ is a subsequence of the sequence $f(x)$ such that  $st-lim_{k\rightarrow\infty}||\Delta t_k,f(y)||=0.$ This completes the proof of the theorem.
\end{proof}
In ordinary sense it is well known that uniform limit of continuous functions is continuous. It is also true that uniform limit of statistically ward continuous functions is statistically ward continuous.

\begin{thm}
\textit{If a sequence $(f_n)$ of statistically ward continuous functions on a subset $E$ of $X$ and $(f_n)$ is uniformly convergent to a function $f$ then $f$ is statistically ward continuous on $E$.}
\end{thm}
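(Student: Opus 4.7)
The plan is to mimic the classical $\varepsilon/3$ argument for uniform limits of continuous functions, adapted to the statistical/quasi-Cauchy setting in a $2$-normed space. Fix an arbitrary statistically quasi-Cauchy sequence $(x_k)$ in $E$; the goal is to show that $(f(x_k))$ is statistically quasi-Cauchy, i.e.\ that for every $\varepsilon>0$ and every $z\in X$,
\[
\lim_{n\to\infty}\frac{1}{n}\bigl|\{k\le n:\|f(x_{k+1})-f(x_k),z\|\ge\varepsilon\}\bigr|=0.
\]

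First I would fix $\varepsilon>0$ and $z\in X$ and, using uniform convergence of $(f_n)$ to $f$ (which in the paper's definition is uniform in both the point and the second slot $z$), choose an index $N$ such that $\|f_N(u)-f(u),z\|<\varepsilon/3$ for all $u\in X$. Then the triangle inequality for $\|\cdot,z\|$ yields
\[
\|f(x_{k+1})-f(x_k),z\|\le \tfrac{2\varepsilon}{3}+\|f_N(x_{k+1})-f_N(x_k),z\|,
\]
so the inclusion
\[
\{k\le n:\|\Delta f(x_k),z\|\ge\varepsilon\}\subseteq \{k\le n:\|\Delta f_N(x_k),z\|\ge \varepsilon/3\}
\]
holds for every $n$.

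Next I would apply the hypothesis that $f_N$ is statistically ward continuous: since $(x_k)$ is statistically quasi-Cauchy, so is $(f_N(x_k))$, which means the set on the right of the inclusion has asymptotic density zero. Monotonicity of density through the inclusion then forces the set on the left also to have density zero, which is exactly what is needed. Since $z$ and $\varepsilon$ were arbitrary, $(f(x_k))$ is statistically quasi-Cauchy, and as $(x_k)$ was an arbitrary statistically quasi-Cauchy sequence in $E$, $f$ is statistically ward continuous.

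The only point that requires care is to make sure the chosen $N$ really works uniformly in the bilinear slot $z$ simultaneously with uniformity in $u$, so that a single triangle-inequality estimate bounds $\|f(x_{k+1})-f(x_k),z\|$ regardless of $k$; this is precisely what the paper's definition of uniform convergence in a $2$-normed space provides, so no genuine obstacle arises. Everything else is the standard $\varepsilon/3$ reduction combined with the easy set-inclusion argument already used in the earlier proposition on sums of statistically ward continuous functions.
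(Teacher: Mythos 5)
Your proposal is correct and follows essentially the same $\varepsilon/3$ argument as the paper: pick $N$ from uniform convergence, use the triangle inequality in the slot $\|\cdot,z\|$, and transfer the density-zero property from $(f_N(x_k))$ to $(f(x_k))$ via a set inclusion. The only cosmetic difference is that you absorb the two uniform-convergence terms into a single bound and get an inclusion into one set, whereas the paper writes the inclusion into a union of three sets (two of which are empty by the choice of $N$); the content is identical.
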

\begin{proof}
Let z be any fixed point of $X$ and $\epsilon\geq0$. Take any statistically quasi-Cauchy sequence $(x_k)$ of points in $E$. By uniform convergence of $(f_n)$ there exists a positive integer $\textbf{N}$ such that $\left\|f_n(x)-f(x),z\right\|<\frac{\epsilon}{3}$, $\forall x\in E$ whenever $n\geq N$. Since the function $f_N$ is statistically ward continuous on $E$, we have
\begin{equation*}
\lim_{n\rightarrow\infty}\frac{1}{n}\left|\left\{k\leq n:\left\|f_N(x_{k+1})-f_N(x_k),z\right\|\geq\frac{\epsilon}{3}\right\}\right|=0
\end{equation*}
By the triangle inequality we have
\begin{align*}
\left\|f(x_{k+1})-f(x_k),z\right\|&\leq\left\|f(x_{k+1})-f_N(x_{k+1}),z\right\|\\
&+\left\|f_N(x_{k+1})-f_N(x_k),z\right\|\\
&+\left\|f_N(x_{k})-f(x_k),z\right\|
\end{align*}
On the other hand, we have
\begin{align*}
\left\{k\leq n:\left\|f(x_{k+1})-f(x_k),z\right\|\geq\epsilon\right\}&\subset\left\{k\leq n:\left\|f(x_{k+1})-f_N(x_{k+1}),z\right\|\geq\frac{\epsilon}{3}\right\}\\
&\cup\left\{k\leq n:\left\|f_N(x_{k+1})-f_N(x_k),z\right\|\geq\frac{\epsilon}{3}\right\}\\
&\cup\left\{k\leq n:\left\|f_N(x_{k})-f(x_k),z\right\|\geq\frac{\epsilon}{3}\right\}
\end{align*}
From this inclusion the following is obtained.
\begin{align*}
&\lim_{n\rightarrow\infty}\frac{1}{n}\left|\left\{k\leq n:\left\|f(x_{k+1})-f(x_k),z\right\|\geq\epsilon\right\}\right|\\
&\leq\lim_{n\rightarrow\infty}\frac{1}{n}\left|\left\{k\leq n:\left\|f(x_{k+1})-f_N(x_{k+1}),z\right\|\geq\frac{\epsilon}{3}\right\}\right|\\
&+\lim_{n\rightarrow\infty}\frac{1}{n}\left|\left\{k\leq n:\left\|f_N(x_{k+1})-f_N(x_k),z\right\|\geq\frac{\epsilon}{3}\right\}\right|\\
&+\lim_{n\rightarrow\infty}\frac{1}{n}\left|\left\{k\leq n:\left\|f_N(x_{k})-f(x_k),z\right\|\geq\frac{\epsilon}{3}\right\}\right|=0
\end{align*}
Thus $f$ is statistically ward continuous on $E$. This completes the proof of the theorem.
\end{proof}

\section{Conclusion}

In this paper, we introduced new kinds of continuities and compactness, and proved interesting theorems related to the concepts of statistical ward continuity and statistically ward compactness, ordinary continuity, ordinary compactness, and some other kinds of continuities. One may expect this concept to be a useful tool in the field of two normed  space theory in modelling various problems occurring in many areas of science, computer science, information theory and biological science. For a further study, we suggest to investigate quasi-Cauchy sequences of points, fuzzy functions and statistically ward continuity for the fuzzy functions in a $2$-normed fuzzy spaces. However due to the change in settings, the definitions and methods of proofs will not always be analogous to those of the present work. We note that the study in this paper can be carried to $n$-normed spaces without any difficulty (see for example \cite{ReddyandDuttaOnEquivalenceofnNorms}, and \cite{EsiandOzdemirIstronglysummablesequencespacesinnnormedspacesdefinedbyidealconvergenceandanOrliczfunction} for the definition of an $n$-normed space, \cite{SarabadanandTalebiStatisticalconvergenceandidealconvergenceofsequencesoffunctionsin$2$-normedspaces} and \cite{SahinerGurdalYigitTIdealconvergencecharacterizationofthecompletionoflinearnnormedspaces}).


\begin{thebibliography}{38}

\bibitem{BuckGeneralizedasymptoticdensity} R.C. Buck, \textit{Generalized asymptotic density}, Amer. J. Math., 75 (1953), pp. 335-346.
\bibitem{burton}
D. Burton, J. Coleman, \textit{Quasi-Cauchy sequences}, Amer. Math. Monthly, 117 (4) (2010), pp. 328-333.
\bibitem{cakalli}
H. Cakalli, \textit{Forward continuity}, J. Comput. Anal. Appl., 13 (2) (2011), pp. 225-230.
\bibitem{cakallislowlyoscillatingcontinuity}
H. Cakalli, \textit{Slowly oscillating continuity}, Abstract and Applied Analysis, 2008 (2008), Article ID 485706, 5 pages.
\bibitem{CakalliNewkindsofcontinuities} H. Cakalli, \textit{New kinds of continuities}, Comput. Math. Appl., 61 (4) (2011), pp. 960-965, 2011. MR 2011j:54008.
\bibitem{CakallistatisticalquasiCauchysequences} H. Cakalli, \textit{Statistical quasi-Cauchy sequences}, Math. Comput. Modelling, 54 (5-6) (2011), pp. 1620-1624, MR 2012f:40006.
\bibitem{CakalliStatisticalwardcontinuity} H. Cakalli, \textit{Statistical ward continuity}, Appl. Math. Lett., 24 (10) (2011), pp. 1724-1728, MR 2012f:40020.
\bibitem{CakalliandKhan} H. Cakalli, M. K. Khan, \textit{Summability in Topological Spaces}, Appl. Math. Lett., 24 (3) (2011), pp. 348-352, MR 2011m:40026.

\bibitem{CasertaMaioKocinacStatisticalConvergenceinFunctionSpaces} A. Caserta, A.G.  Di Maio, LjDR. Ko\v cinac, \textit{Statistical Convergence in Function Spaces}, Abstract and Applied Analysis, 2011 (2011), Article ID 420419, 11 pages, doi:10.1155/2011/420419 MR2012k:40003.
\bibitem{aleks}
H.Y. Chu, C.G. Park, W.G. Park, \textit{The Aleksandrov problem in linear $2$-normed spaces}, J. Math. Anal. Appl., 289 (2004), pp. 666-672.
\bibitem{ConnorandSwardsonStrongintegralsummabilityandstonecompactification} J. Connor, M.A. Swardson, \textit{Strong integral summability and the Stone-Cech compactification of the half-line}, Pacific J. Math., 157 (2) (1993), pp. 201-224, MR94f:40007.
\bibitem{DikandCanak}
M. Dik, I. Canak, \textit{New Types of Continuities}, Abstr. Appl. Anal. Hindawi Publ. Corp., New York, ISSN 1085-3375, 2010 (2010), Article ID 258980,  doi:10.1155/2010/258980 MR2011c:26005.
\bibitem{ErdosSurlesdensitesde} P. Erdos, G. Tenenbaum, \textit{Sur les densit´es de certaines suites d'entiers}, Proc. Lond. Math. Soc., 59 (3) (1989), pp. 417-438, MR 90h:11087.
\bibitem{EsiandOzdemirIstronglysummablesequencespacesinnnormedspacesdefinedbyidealconvergenceandanOrliczfunction} A. Esi, M.K. Ozdemir, \emph{I-strongly summable sequence spaces in n-normed spaces defined by ideal convergence and an Orlicz function}, Mathematica Slovaca, 63 (4) (2013), pp. 829-838   DOI: 10.2478/s12175-013-0137-y
\bibitem{Fast} H. Fast, \textit{Sur la convergence statistique}, Colloq. Math., 2 (1951), pp. 241-244, MR 14:29c.
\bibitem{FreedmanandSemberDensitiesandsummability} A.R. Freedman, J.J. Sember, \textit{Densities and summability}, Pacific J. Math., 95 (2) (1981), pp. 293-305, MR 82m:10081.
\bibitem{FridyOnstatisticalconvergence} J.A. Fridy, \textit{On statistical convergence}, Analysis, 5 (1985), pp. 301-313, MR87b:40001.
\bibitem{gohler}
S. G\"ahler, \textit{$2$-metrische Raume und ihre topologische Struktur}, Math. Nachr., 26 (1963), pp. 115-148.
\bibitem{g2}
S. G\"ahler,\textit{ Lineare $2$-normietre Raume}, Math. Nachr., 28 (1965), pp. 1-43.
\bibitem{g3}
S. G\"ahler, \textit{Uber der Uniformisierbarkeit $2$-metrische Raume}, Math. Nachr., 28 (1965), pp. 235-244.
\bibitem{ist}
M. Gurdal, A. Sahiner, \textit{Statistical approximation with a sequence of $2$-Banach spaces}, Mathematical and Computer Modelling, 55 (2012), pp. 471-479.
\bibitem{GurdalandPehlivanStatisticalconvergencein$2$-normedspaces} M. Gurdal, S. Pehlivan, \textit{Statistical convergence in $2$-normed spaces}, Southeast Asian Bull. Math., 33 (2009), pp. 257-264.
\bibitem{MaddoxStatisticalconvergenceinlocallyconvex} I.J. Maddox, \textit{Statistical convergence in a locally convex space}, Math. Proc. Cambridge Philos. Soc., 104 (1) (1988), pp. 141-145, MR 89k:40012.
\bibitem{MaioKocinac} GDi. Maio, LjDR. Ko\v cinac, \textit{Statistical convergence in topology}, Topology Appl., 156 (2008), pp. 28-45, MR2009k:54009.
\bibitem{MakarovLevinRubinovMathematicalEconomicTheory} V.L. Makarov, M.J. Levin, A.M. Rubinov, \textit{Mathematical Economic Theory: Pure and Mixed Types of EconomicMechanisms}, Advanced Textbooks in Economics, North-Holland, Amsterdam, The Netherlands, 33 (1995).
\bibitem{gunawan}
H.G. Mashadi, \textit{On finite dimensional $2$-normed spaces}, Soochow J. Math., 3 (27) (2001), pp. 321-329.
\bibitem{MillerAmeasuretheoresubsequencecharacterizationofstatisticalconvergence} H.I. Miller, \textit{A measure theoretical subsequence characterization of statistical convergence}, Trans. Amer. Math. Soc., 347 (5) (1995), pp. 1811-1819, MR 95h:40010.
\bibitem{ReddyandDuttaOnEquivalenceofnNorms} B.S. Reddy, H. Dutta, \textit{On Equivalence of n-Norms in n-Normed Spaces}, The Pacific Journal of Science and Technology, 11 (2010), pp. 233-238.
\bibitem{SahinerGurdalYigitTIdealconvergencecharacterizationofthecompletionoflinearnnormedspaces} A. Sahiner, M. Gurdal, T. Yigit, \textit{Ideal convergence characterization of the completion of linear $n$-normed spaces}, Computers and Mathematics with Applications, 61 (3) (2011), pp. 683-689, DOI: 10.1016/j.camwa.2010.12.015.
\bibitem{SarabadanandTalebiStatisticalconvergenceandidealconvergenceofsequencesoffunctionsin$2$-normedspaces} S. Sarabadan, S. Talebi, \textit{Statistical convergence and ideal convergence of sequences of functions in $2$-normed spaces}, Int. J. Math. Math. Sci.  Hindawi Publ. Corp., New York, ISSN 1085-3375, (2011), Art. ID 517841, 10 pp.  doi:10.1155/2011/517841 MR 2012c:46016.
\bibitem{Schoenberg} I.J. Schoenberg, \textit{The integrability of certain functions and related summability methods}, Amer. Math. Monthly , 66 (1959), pp. 361-375, MR 21:3696.
\bibitem{ZygmundTrigonometricseries} A. Zygmund, \textit{Trigonometric series}, Third edition. With a foreword by Robert A. Fefferman. Cambridge Mathematical Library. Cambridge University Press, Cambridge, (2002) MR 2004h:01041.

\end{thebibliography}
\end{document}